\documentclass{amsart}
\usepackage{amsmath}
\usepackage{amssymb}
\usepackage{amsfonts}
\usepackage{graphicx}
\usepackage{tikz}
\usetikzlibrary{decorations.markings}

\usepackage{longtable}
\usepackage{array}

\usepackage{lipsum} 
\usepackage[T1]{fontenc}
\newtheorem{theorem}{Theorem}

\tikzset{line_diagram/.style={draw=black, thick, line cap=round}}
\tikzset{grey_line_diagram/.style={draw=gray, line cap=round}}
\tikzset{orientation_line/.style={draw=gray, thin, ->}}
\tikzset{
  line_arrow/.style={
        draw=black,
        thick,
        line cap=round,
        postaction={decorate,decoration={
            markings,
            mark=at position .5 with {\arrow[#1]{>}}
        }}},
}
\def\PointRadius{0.03}
\newcommand{\K}{\mathbb{K}}

\title{Pentagon relation and Biedenharn -- Elliott identity}
\author{Korablev Ph. G.}

\address{Chelyabinsk State University, Chelyabinsk, Russia; N.N. Krasovsky Institute of Mathematics and Meckhanics, Ekaterinburg, Russia}
\email{korablev@csu.ru}
\date{}

\begin{document}

\begin{abstract}
    The main subject of the paper is the pentagon relation. This relation can be expressed in different ways. We start with the natural geometric form of the pentagon relation. Then we express it in algebraic form as a family of equations with a set of linear maps as variables. Next, we derive several equivalent forms of the algebraic pentagon relation. These forms can be expressed using the classical notion of 6j-symbols. Finally, we show how to extract a solution of the pentagon relation from any modular category.
\end{abstract}

\maketitle

\section{Geometric form of the pentagon relation}

There are five different triangulations of the pentagon. It's possible to change any of these triangulations into another one by using diagonal flips. On the figure \ref{Figure:Pentagon}, the most left triangulation can be transformed into the most right one in two different ways: by using three flips (upper part of the figure) or by using two flips (lower part of the figure). This diagram is a geometric form of the pentagon relation.

\begin{figure}[h]
    \begin{tikzpicture}
        \draw (3, 0) node {
            \begin{tikzpicture}
                \draw[line_diagram] (0.7, 1) -- (0, -1);
                \draw[line_diagram] (0.7, 1) -- (-1, 0);
                \filldraw (1, 0) circle (\PointRadius);
                \filldraw (-1, 0) circle (\PointRadius);
                \filldraw (0.7, 1) circle (\PointRadius);
                \filldraw (-0.7, 1) circle (\PointRadius);
                \filldraw (0, -1) circle (\PointRadius);
                \draw[line_diagram] (-0.7, 1) -- (-1, 0);
                \draw[line_diagram] (-1, 0) -- (0, -1);
                \draw[line_diagram] (0, -1) -- (1, 0);
                \draw[line_diagram] (1, 0) -- (0.7, 1);
                \draw[line_diagram] (-0.7, 1) -- (0.7, 1);
            \end{tikzpicture}
        };
        \draw (-3, 0) node {
            \begin{tikzpicture}
                \draw[line_diagram] (-0.7, 1) -- (0, -1);
                \draw[line_diagram] (-0.7, 1) -- (1, 0);
                \filldraw (1, 0) circle (\PointRadius);
                \filldraw (-1, 0) circle (\PointRadius);
                \filldraw (0.7, 1) circle (\PointRadius);
                \filldraw (-0.7, 1) circle (\PointRadius);
                \filldraw (0, -1) circle (\PointRadius);
                \draw[line_diagram] (-0.7, 1) -- (-1, 0);
                \draw[line_diagram] (-1, 0) -- (0, -1);
                \draw[line_diagram] (0, -1) -- (1, 0);
                \draw[line_diagram] (1, 0) -- (0.7, 1);
                \draw[line_diagram] (-0.7, 1) -- (0.7, 1);
            \end{tikzpicture}
        };
        \draw (2, 3.1) node {
            \begin{tikzpicture}
                \draw[line_diagram] (0.7, 1) -- (-1, 0);
                \draw[line_diagram] (-1, 0) -- (1, 0);
                \filldraw (1, 0) circle (\PointRadius);
                \filldraw (-1, 0) circle (\PointRadius);
                \filldraw (0.7, 1) circle (\PointRadius);
                \filldraw (-0.7, 1) circle (\PointRadius);
                \filldraw (0, -1) circle (\PointRadius);
                \draw[line_diagram] (-0.7, 1) -- (-1, 0);
                \draw[line_diagram] (-1, 0) -- (0, -1);
                \draw[line_diagram] (0, -1) -- (1, 0);
                \draw[line_diagram] (1, 0) -- (0.7, 1);
                \draw[line_diagram] (-0.7, 1) -- (0.7, 1);
            \end{tikzpicture}
        };
        \draw (-2, 3.1) node {
            \begin{tikzpicture}
                \draw[line_diagram] (-0.7, 1) -- (1, 0);
                \draw[line_diagram] (-1, 0) -- (1, 0);
                \filldraw (1, 0) circle (\PointRadius);
                \filldraw (-1, 0) circle (\PointRadius);
                \filldraw (0.7, 1) circle (\PointRadius);
                \filldraw (-0.7, 1) circle (\PointRadius);
                \filldraw (0, -1) circle (\PointRadius);
                \draw[line_diagram] (-0.7, 1) -- (-1, 0);
                \draw[line_diagram] (-1, 0) -- (0, -1);
                \draw[line_diagram] (0, -1) -- (1, 0);
                \draw[line_diagram] (1, 0) -- (0.7, 1);
                \draw[line_diagram] (-0.7, 1) -- (0.7, 1);
            \end{tikzpicture}
        };
        \draw (0, -2.2) node {
            \begin{tikzpicture}
                \draw[line_diagram] (-0.7, 1) -- (0, -1);
                \draw[line_diagram] (0.7, 1) -- (0, -1);
                \filldraw (1, 0) circle (\PointRadius);
                \filldraw (-1, 0) circle (\PointRadius);
                \filldraw (0.7, 1) circle (\PointRadius);
                \filldraw (-0.7, 1) circle (\PointRadius);
                \filldraw (0, -1) circle (\PointRadius);
                \draw[line_diagram] (-0.7, 1) -- (-1, 0);
                \draw[line_diagram] (-1, 0) -- (0, -1);
                \draw[line_diagram] (0, -1) -- (1, 0);
                \draw[line_diagram] (1, 0) -- (0.7, 1);
                \draw[line_diagram] (-0.7, 1) -- (0.7, 1);
            \end{tikzpicture}
        };
        \draw[->] (-3, 1.2) -- (-2.5, 2.2);
        \draw[->] (2.5, 2.2) -- (3, 1.2);
        \draw[->] (-0.8, 3.5) -- (0.8, 3.5);
        \draw[->] (-2.5, -0.8) -- (-1.1, -1.7);
        \draw[->] (1.1, -1.7) -- (2.5, -0.8);
    \end{tikzpicture}
    \caption{\label{Figure:Pentagon}Geometric form of the pentagon relation}
\end{figure}

\section{Algebraic form of the pentagon relation}

In this section we will translate the geometric form of the pentagon relation into algebraic language. We will primarily use the approach of \cite{Ya}.

Let $I$ be a finite set. The elements of this set will play the role of colours for edge colouring and indices for module indexing. Fix the family of modules $V_{ab}^c$ over the ring $\K$, defined for each $a, b, c\in I$. The algebraic form of the pentagon relation is an equation for linear maps over these modules.

Consider a triangle with vertices ordered by the symbols $0, 1, 2$. This order defines the orientation of the triangle and the orientation of each edge: the edge oriented from vertex $i$ to vertex $j$ if $i < j$. It's clear that induced edge orientations coincide for two edges and opposite for the third one. Colour the edges of the triangle using elements from set the $I$. Then this coloured triangle corresponds to the module $V_{ab}^c$, where $a$ is the colour of the edge $[0, 1]$, $b$ is the colour of the edge $[1, 2]$ and $c$ is the colour of the edge $[0, 2]$ (figure \ref{Figure:ColouredTriangle}).

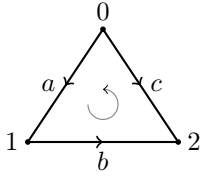
\begin{figure}[h]
    \begin{tikzpicture}
        \filldraw (0, 1.5) circle (\PointRadius) node[above] {0};
        \filldraw (1, 0) circle (\PointRadius) node[right] {2};
        \filldraw (-1, 0) circle (\PointRadius) node[left] {1};
        \draw[line_arrow] (-1, 0) -- (1, 0) node[below, pos=0.5] {$b$};
        \draw[line_arrow] (0, 1.5) -- (1, 0) node[right, pos=0.5] {$c$};
        \draw[line_arrow] (0, 1.5) -- (-1, 0) node[left, pos=0.5] {$a$};
        \draw[orientation_line] (-0.2, 0.5) arc (-180:90:0.2);
    \end{tikzpicture}
    \caption{\label{Figure:ColouredTriangle}Coloured oriented triangle corresponds to the module $V_{ab}^c$}
\end{figure}

Consider the triangulated square with ordered vertices (figure \ref{Figure:ColouredSquare} on the left). As before, orient edges from the vertex with the smaller value to the vertex with the larger value. Colour the boundary edges with colours $a, b, c, d\in I$ and the inner diagonal with the colour $x\in I$. This coloured square corresponds to the tensor product of the modules, which corresponds to the triangles of the triangulation: $V_{xc}^d\otimes V_{ab}^x$.

\begin{figure}[h]
    \begin{tikzpicture}
        \filldraw (0, 0) circle (\PointRadius) node[below left] {1};
        \filldraw (1.5, 0) circle (\PointRadius) node[below right] {2};
        \filldraw (1.5, 1.5) circle (\PointRadius) node[above right] {3};
        \filldraw (0, 1.5) circle (\PointRadius) node[above left] {0};
        \draw[line_arrow] (0, 0) -- (1.5, 0) node[below, pos=0.5] {$b$};
        \draw[line_arrow] (1.5, 0) -- (1.5, 1.5) node[right, pos=0.5] {$c$};
        \draw[line_arrow] (0, 1.5) -- (0, 0) node[left, pos=0.5] {$a$};
        \draw[line_arrow] (0, 1.5) -- (1.5, 1.5) node[above, pos=0.5] {$d$};
        \draw[line_arrow] (0, 1.5) -- (1.5, 0) node[right, pos=0.5] {$x$};
    \end{tikzpicture}
    \hspace{1cm}
    \begin{tikzpicture}
        \filldraw (0, 0) circle (\PointRadius) node[below left] {1};
        \filldraw (1.5, 0) circle (\PointRadius) node[below right] {2};
        \filldraw (1.5, 1.5) circle (\PointRadius) node[above right] {3};
        \filldraw (0, 1.5) circle (\PointRadius) node[above left] {0};
        \draw[line_arrow] (0, 0) -- (1.5, 0) node[below, pos=0.5] {$b$};
        \draw[line_arrow] (1.5, 0) -- (1.5, 1.5) node[right, pos=0.5] {$c$};
        \draw[line_arrow] (0, 1.5) -- (0, 0) node[left, pos=0.5] {$a$};
        \draw[line_arrow] (0, 1.5) -- (1.5, 1.5) node[above, pos=0.5] {$d$};
        \draw[line_arrow] (0, 0) -- (1.5, 1.5) node[right, pos=0.5] {$y$};
    \end{tikzpicture}
    \caption{\label{Figure:ColouredSquare}The coloured square corresponds to the tensor product $V_{xc}^d\otimes V_{ab}^x$ (on the left) and to the tensor product $V_{ay}^d\otimes V_{bc}^y$ (on the right).}
\end{figure}
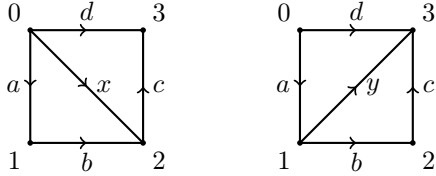

There is another triangulation of the oriented square (figure \ref{Figure:ColouredSquare} on the right). It corresponds to the tensor product $V_{ay}^d\otimes V_{bc}^y$. For both triangulations we order the triangles by using the same rule: the first triangle in the triangulation is the triangle incident to the boundary edge $[0, 3]$.

The triangulated square with coloured boundary edges (and non-coloured diagonal) corresponds to the direct sum of the modules over all possible colours of the inner diagonal. We draw this direct sum as a square with a grey diagonal. So the square on the left of the figure \ref{Figure:SquareAnyDiagonal} corresponds to the module $$\bigoplus_{x\in I}V_{xc}^d\otimes V_{ab}^x,$$ and the square on the right of the figure \ref{Figure:SquareAnyDiagonal} corresponds to the module $$\bigoplus_{y\in I}V_{ay}^d\otimes V_{bc}^y.$$.

\begin{figure}[h]
    \begin{tikzpicture}
        \draw[grey_line_diagram] (0, 1.5) -- (1.5, 0);
        \filldraw (0, 0) circle (\PointRadius) node[below left] {1};
        \filldraw (1.5, 0) circle (\PointRadius) node[below right] {2};
        \filldraw (1.5, 1.5) circle (\PointRadius) node[above right] {3};
        \filldraw (0, 1.5) circle (\PointRadius) node[above left] {0};
        \draw[line_arrow] (0, 0) -- (1.5, 0) node[below, pos=0.5] {$b$};
        \draw[line_arrow] (1.5, 0) -- (1.5, 1.5) node[right, pos=0.5] {$c$};
        \draw[line_arrow] (0, 1.5) -- (0, 0) node[left, pos=0.5] {$a$};
        \draw[line_arrow] (0, 1.5) -- (1.5, 1.5) node[above, pos=0.5] {$d$};
    \end{tikzpicture}
    \hspace{1cm}
    \begin{tikzpicture}
        \draw[grey_line_diagram] (0, 0) -- (1.5, 1.5);
        \filldraw (0, 0) circle (\PointRadius) node[below left] {1};
        \filldraw (1.5, 0) circle (\PointRadius) node[below right] {2};
        \filldraw (1.5, 1.5) circle (\PointRadius) node[above right] {3};
        \filldraw (0, 1.5) circle (\PointRadius) node[above left] {0};
        \draw[line_arrow] (0, 0) -- (1.5, 0) node[below, pos=0.5] {$b$};
        \draw[line_arrow] (1.5, 0) -- (1.5, 1.5) node[right, pos=0.5] {$c$};
        \draw[line_arrow] (0, 1.5) -- (0, 0) node[left, pos=0.5] {$a$};
        \draw[line_arrow] (0, 1.5) -- (1.5, 1.5) node[above, pos=0.5] {$d$};
    \end{tikzpicture}
    \caption{\label{Figure:SquareAnyDiagonal}The square on the left corresponds to the module $\bigoplus\limits_{x\in I}V_{xc}^d\otimes V_{ab}^x$, and the square on the right corresponds to the module $\bigoplus\limits_{y\in I}V_{ay}^d\otimes V_{bc}^y$}
\end{figure}
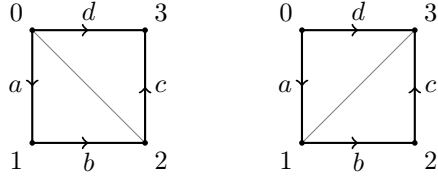

Let $$\mathcal{F}_{abc}^{d}\colon \bigoplus_{x\in I}V_{xc}^d\otimes V_{ab}^x\to \bigoplus_{y\in I}V_{ay}^d\otimes V_{bc}^y$$ be the linear map between two modules, corresponding to different triangulations of the coloured square. This map corresponds to the flip of the diagonal. That's why it is denoted by $\mathcal{F}$. Let $$\mathcal{F}_{abc}^{d}|^{x}_{y}\colon V_{xc}^d\otimes V_{ab}^x \to V_{ay}^d\otimes V_{bc}^y$$ denote components of the map $\mathcal{F}_{abc}^d$ between summands of the input and output modules.

For each $\alpha\in \bigoplus\limits_{x\in I}V_{xc}^d\otimes V_{ab}^x$ denote by $\alpha^{x}\in V_{xc}^d\otimes V_{ab}^x$ the component of the vector $\alpha$ from the summand that corresponds to the colour $x\in I$. It's clear that $$\alpha = \sum\limits_{x\in I}\alpha^x.$$ Then $$\mathcal{F}_{abc}^d(\alpha) = \mathcal{F}_{abc}^{d}\left(\sum\limits_{x\in I}\alpha^x\right) = \sum\limits_{x, y\in I}\mathcal{F}_{abc}^{d}|^{x}_{y}(\alpha^x).$$

For any $a, b, c\in I$ denote $id_{ab}^c\colon V_{ab}^c\to V_{ab}^c$ the identity map.

Consider the pentagon. As before, define the cyclic order of its vertices, and the colouring of its edges. There are five different triangulations of the pentagon. On the figure \ref{Figure:PentagonTriangulations} all these triangulations are shown with the corresponding modules.

\begin{figure}[h]
    \begin{tikzpicture}
        \draw[grey_line_diagram] (-0.7, 1) -- (0, -1);
        \draw[grey_line_diagram] (-0.7, 1) -- (1, 0);
        \filldraw (1, 0) circle (\PointRadius);
        \filldraw (-1, 0) circle (\PointRadius);
        \filldraw (0.7, 1) circle (\PointRadius);
        \filldraw (-0.7, 1) circle (\PointRadius);
        \filldraw (0, -1) circle (\PointRadius);
        \draw[line_arrow] (-0.7, 1) -- (-1, 0) node[left, pos=0.5] {$a$};
        \draw[line_arrow] (-1, 0) -- (0, -1) node[left, pos=0.5] {$b$};
        \draw[line_arrow] (0, -1) -- (1, 0) node[right, pos=0.5] {$c$};
        \draw[line_arrow] (1, 0) -- (0.7, 1) node[right, pos=0.5] {$d$};
        \draw[line_arrow] (-0.7, 1) -- (0.7, 1) node[above, pos=0.5] {$e$};
        \draw (0, -1.75) node {$\bigoplus\limits_{x, y\in I}V_{yd}^e\otimes V_{xc}^y\otimes V_{ab}^x$};
    \end{tikzpicture}
    \hfill
    \begin{tikzpicture}
        \draw[grey_line_diagram] (-1, 0) -- (1, 0);
        \draw[grey_line_diagram] (-0.7, 1) -- (1, 0);
        \filldraw (1, 0) circle (\PointRadius);
        \filldraw (-1, 0) circle (\PointRadius);
        \filldraw (0.7, 1) circle (\PointRadius);
        \filldraw (-0.7, 1) circle (\PointRadius);
        \filldraw (0, -1) circle (\PointRadius);
        \draw[line_arrow] (-0.7, 1) -- (-1, 0) node[left, pos=0.5] {$a$};
        \draw[line_arrow] (-1, 0) -- (0, -1) node[left, pos=0.5] {$b$};
        \draw[line_arrow] (0, -1) -- (1, 0) node[right, pos=0.5] {$c$};
        \draw[line_arrow] (1, 0) -- (0.7, 1) node[right, pos=0.5] {$d$};
        \draw[line_arrow] (-0.7, 1) -- (0.7, 1) node[above, pos=0.5] {$e$};
        \draw (0, -1.75) node {$\bigoplus\limits_{y, z\in I}V_{yd}^e\otimes V_{az}^y\otimes V_{bc}^z$};
    \end{tikzpicture}
    \hfill
    \begin{tikzpicture}
        \draw[grey_line_diagram] (-1, 0) -- (1, 0);
        \draw[grey_line_diagram] (-1, 0) -- (0.7, 1);
        \filldraw (1, 0) circle (\PointRadius);
        \filldraw (-1, 0) circle (\PointRadius);
        \filldraw (0.7, 1) circle (\PointRadius);
        \filldraw (-0.7, 1) circle (\PointRadius);
        \filldraw (0, -1) circle (\PointRadius);
        \draw[line_arrow] (-0.7, 1) -- (-1, 0) node[left, pos=0.5] {$a$};
        \draw[line_arrow] (-1, 0) -- (0, -1) node[left, pos=0.5] {$b$};
        \draw[line_arrow] (0, -1) -- (1, 0) node[right, pos=0.5] {$c$};
        \draw[line_arrow] (1, 0) -- (0.7, 1) node[right, pos=0.5] {$d$};
        \draw[line_arrow] (-0.7, 1) -- (0.7, 1) node[above, pos=0.5] {$e$};
        \draw (0, -1.75) node {{$\bigoplus\limits_{z, p\in I}V_{ap}^e\otimes V_{zd}^p\otimes V_{bc}^z$}};
    \end{tikzpicture}

    \hspace{1.5cm}
    \begin{tikzpicture}
        \draw[grey_line_diagram] (0.7, 1) -- (0, -1);
        \draw[grey_line_diagram] (-1, 0) -- (0.7, 1);
        \filldraw (1, 0) circle (\PointRadius);
        \filldraw (-1, 0) circle (\PointRadius);
        \filldraw (0.7, 1) circle (\PointRadius);
        \filldraw (-0.7, 1) circle (\PointRadius);
        \filldraw (0, -1) circle (\PointRadius);
        \draw[line_arrow] (-0.7, 1) -- (-1, 0) node[left, pos=0.5] {$a$};
        \draw[line_arrow] (-1, 0) -- (0, -1) node[left, pos=0.5] {$b$};
        \draw[line_arrow] (0, -1) -- (1, 0) node[right, pos=0.5] {$c$};
        \draw[line_arrow] (1, 0) -- (0.7, 1) node[right, pos=0.5] {$d$};
        \draw[line_arrow] (-0.7, 1) -- (0.7, 1) node[above, pos=0.5] {$e$};
        \draw (0, -1.75) node {{$\bigoplus\limits_{p, q\in I}V_{ap}^e\otimes V_{bq}^p\otimes V_{cd}^q$}};
    \end{tikzpicture}
    \hfill
    \begin{tikzpicture}
        \draw[grey_line_diagram] (0.7, 1) -- (0, -1);
        \draw[grey_line_diagram] (-0.7, 1) -- (0, -1);
        \filldraw (1, 0) circle (\PointRadius);
        \filldraw (-1, 0) circle (\PointRadius);
        \filldraw (0.7, 1) circle (\PointRadius);
        \filldraw (-0.7, 1) circle (\PointRadius);
        \filldraw (0, -1) circle (\PointRadius);
        \draw[line_arrow] (-0.7, 1) -- (-1, 0) node[left, pos=0.5] {$a$};
        \draw[line_arrow] (-1, 0) -- (0, -1) node[left, pos=0.5] {$b$};
        \draw[line_arrow] (0, -1) -- (1, 0) node[right, pos=0.5] {$c$};
        \draw[line_arrow] (1, 0) -- (0.7, 1) node[right, pos=0.5] {$d$};
        \draw[line_arrow] (-0.7, 1) -- (0.7, 1) node[above, pos=0.5] {$e$};
        \draw (0, -1.75) node {{$\bigoplus\limits_{x, q\in I}V_{xq}^e\otimes V_{cd}^q\otimes V_{ab}^x$}};
    \end{tikzpicture}
    \hspace{1.5cm}
    \caption{\label{Figure:PentagonTriangulations}Different triangulations of the coloured pentagon and corresponding modules}
\end{figure}
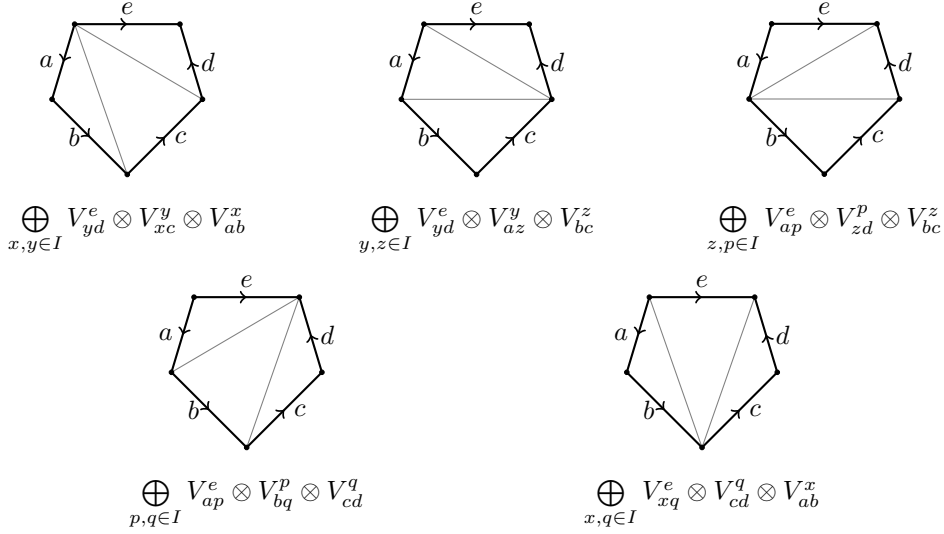

Let $\bigoplus\limits_{x}U_x$ and $\bigoplus\limits_{x}W_x$ be two modules with summands indexed by some $x$, and assume that for each $x$ there exists a linear map $f_x\colon U_x \to V_x$. Then denote $$\bigoplus_x f_x\colon \bigoplus\limits_{x}U_x \to \bigoplus\limits_{x}W_x$$ the corresponding linear map which acts on each summand $U_x$ as $f_x$. Using these notations, we can write the map between modules corresponding to different triangulations of the pentagon.

The map between the first and the second triangulation from the figure \ref{Figure:PentagonTriangulations} is $\bigoplus\limits_{y\in I}id_{yd}^e\otimes\mathcal{F}_{abc}^{y}$, the map between the second and the third triangulation is $\bigoplus\limits_{z\in I}\mathcal{F}_{azd}^{e}\otimes id_{bc}^{z}$, and the map between the third and the forth triangulation is $\bigoplus\limits_{p\in I}id_{ap}^{e}\otimes \mathcal{F}_{pbc}^{d}$. In the similar way the map between the first and the fifth triangulation is $\bigoplus\limits_{x\in I}\mathcal{F}_{xcd}^{e}\otimes id_{ab}^{x}$, and the map between the fifth and the forth triangulation is a composition of the permutation $$P_{23}\colon \bigoplus\limits_{x, q\in I}V_{xq}^e\otimes V_{cd}^q\otimes V_{ab}^x\to \bigoplus\limits_{x, q\in I}V_{xq}^e\otimes V_{ab}^x\otimes V_{cd}^q,$$ which permute the second and the third multipliers in each summand, and the map $\bigoplus\limits_{q\in I}\mathcal{F}_{abq}^{e}\otimes id_{cd}^{q}$.

Consequently, the pentagon relation can be written as follows

\begin{multline*}
    \Bigl(\bigoplus\limits_{y\in I}id_{yd}^e\otimes\mathcal{F}_{abc}^{y}\Bigr)\circ \Bigl(\bigoplus\limits_{z\in I}\mathcal{F}_{azd}^{e}\otimes id_{bc}^{z}\Bigr)\circ \Bigl(\bigoplus\limits_{p\in I}id_{ap}^{e}\otimes \mathcal{F}_{bcd}^{p}\Bigr) = \\ = \Bigl(\bigoplus\limits_{x\in I}\mathcal{F}_{xcd}^{e}\otimes id_{ab}^{x}\Bigr)\circ P_{23}\circ \Bigl(\bigoplus\limits_{q\in I}\mathcal{F}_{abq}^{e}\otimes id_{cd}^{q}\Bigr).
\end{multline*}

In this equation we write a composition of linear maps from left to right. The left part of the equation is a composition of maps
\begin{multline*}
    \bigoplus\limits_{x, y\in I}V_{yd}^e\otimes V_{xc}^y\otimes V_{ab}^x \to \bigoplus\limits_{y, z\in I}V_{yd}^e\otimes V_{az}^y\otimes V_{bc}^z \to \\ \to \bigoplus\limits_{z, p\in I}V_{ap}^e\otimes V_{zd}^p\otimes V_{bc}^z \to \bigoplus\limits_{p, q\in I}V_{ap}^e\otimes V_{bq}^p\otimes V_{cd}^q,
\end{multline*}
and the right part is a composition of maps
\begin{multline*}
    \bigoplus\limits_{x, y\in I}V_{yd}^e\otimes V_{xc}^y\otimes V_{ab}^x \to \bigoplus\limits_{x, q\in I}V_{xq}^e\otimes V_{cd}^q\otimes V_{ab}^x \to \\ \to \bigoplus\limits_{x, q\in I}V_{xq}^e\otimes V_{ab}^x\otimes V_{cd}^q \to \bigoplus\limits_{p, q\in I}V_{ap}^e\otimes V_{bq}^p\otimes V_{cd}^q.
\end{multline*}

In fact, the pentagon relation defines the system of equations that can be written for any $a, b, c, d, e\in I$. The solution of the pentagon relation is a family of linear maps $\{\mathcal{F}_{abc}^{d}\}$, defined for any $a, b, c, d\in I$, which satisfy all these equations.

\section{Three forms of the pentagon relation}

In this section we derive several alternative forms of the pentagon relation. They are all equivalent to the original one, but expressed in slightly different terms.

\subsection{Component form}

\begin{theorem}
    \label{Theorem:ShortForm}
    Let the family $\{\mathcal{F}_{abc}^{d}\}$ be a solution of the pentagon relation. Then for each $a, b, c, d, e, x, y, p, q\in I$ the components $\{\mathcal{F}_{abc}^{d}|^{x}_{y}\}$ satisfy to the following equation:
    \begin{multline*}
        \sum\limits_{z\in I}(id_{yd}^e\otimes \mathcal{F}_{abc}^{y}|^{x}_{z})\circ (\mathcal{F}_{azd}^{e}|^{y}_{p}\otimes id_{bc}^{z})\circ (id_{ap}^{e}\otimes \mathcal{F}_{bcd}^{p}|^{z}_{q}) = \\ = (\mathcal{F}_{xcd}^{e}|^{y}_{q}\otimes id_{ab}^{x})\circ P_{23}\circ (\mathcal{F}_{abq}^{e}|^{x}_{p}\otimes id_{cd}^{q}).
    \end{multline*}
\end{theorem}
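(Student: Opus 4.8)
The plan is to obtain the component equation by restricting the operator pentagon relation to a single input summand and a single output summand. For fixed $x,y$ let $\iota_{x,y}$ be the canonical inclusion of $V_{yd}^e\otimes V_{xc}^y\otimes V_{ab}^x$ into the input module $\bigoplus_{x,y}V_{yd}^e\otimes V_{xc}^y\otimes V_{ab}^x$, and for fixed $p,q$ let $\pi_{p,q}$ be the canonical projection of the output module $\bigoplus_{p,q}V_{ap}^e\otimes V_{bq}^p\otimes V_{cd}^q$ onto the summand $V_{ap}^e\otimes V_{bq}^p\otimes V_{cd}^q$. Since the family $\{\mathcal{F}_{abc}^d\}$ solves the pentagon relation, the operator equation holds for the chosen $a,b,c,d,e$, and I would precompose both sides with $\iota_{x,y}$ and postcompose with $\pi_{p,q}$. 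The resulting equality of maps $V_{yd}^e\otimes V_{xc}^y\otimes V_{ab}^x\to V_{ap}^e\otimes V_{bq}^p\otimes V_{cd}^q$ is, after expanding each composite into its summand components, exactly the asserted identity.

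The main computation is the expansion of each composite. For the left-hand side I would record the block behaviour of the three direct-sum maps on individual summands, using only the definition $\mathcal{F}(\alpha)=\sum\mathcal{F}|_y^x(\alpha^x)$ together with the compatibility of tensoring with identities and forming direct sums with passage to summand components: the map $\bigoplus_y id_{yd}^e\otimes\mathcal{F}_{abc}^y$ sends the $(x,y)$-summand into the $(y,z)$-summands via $id_{yd}^e\otimes\mathcal{F}_{abc}^y|_z^x$ (preserving the first index $y$); the map $\bigoplus_z\mathcal{F}_{azd}^e\otimes id_{bc}^z$ sends the $(y,z)$-summand into the $(z,p)$-summands via $\mathcal{F}_{azd}^e|_p^y\otimes id_{bc}^z$ (preserving $z$); and $\bigoplus_p id_{ap}^e\otimes\mathcal{F}_{bcd}^p$ sends the $(z,p)$-summand into the $(p,q)$-summands via $id_{ap}^e\otimes\mathcal{F}_{bcd}^p|_q^z$ (preserving $p$). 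Composing these blocks from the input index $(x,y)$ to the output index $(p,q)$, every intermediate summand index is forced to a fixed value except the index $z$, which is created by the first map and consumed by the third; summing over it yields precisely the left-hand side of the theorem.

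For the right-hand side I would perform the analogous bookkeeping along the chain $(x,y)\to(x,q)\to(x,q)\to(p,q)$: the map $\bigoplus_x\mathcal{F}_{xcd}^e\otimes id_{ab}^x$ sends $(x,y)$ into $(x,q)$ via $\mathcal{F}_{xcd}^e|_q^y\otimes id_{ab}^x$, the permutation $P_{23}$ leaves the summand index unchanged while swapping the last two tensor factors, and $\bigoplus_q\mathcal{F}_{abq}^e\otimes id_{cd}^q$ sends $(x,q)$ into $(p,q)$ via $\mathcal{F}_{abq}^e|_p^x\otimes id_{cd}^q$. Here $x$ is fixed by the input and both $p$ and $q$ are fixed by the output, so no internal index remains free and no summation appears. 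Equating the two projected-and-included composites then gives the stated identity. The only point demanding care — and the sole place where an error could enter — is this index bookkeeping: confirming that exactly one index ($z$) is summed on the left while none is summed on the right, and that the identity factors $id_{yd}^e,\,id_{bc}^z,\,id_{ap}^e,\,id_{ab}^x,\,id_{cd}^q$ remain attached to the correct tensor slots. Because each direct-sum map is block-diagonal in the index described above, this reduces to matching intermediate summand labels consistently with the left-to-right composition convention, which is routine.
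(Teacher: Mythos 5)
Your proposal is correct and follows essentially the same route as the paper: the paper evaluates both sides of the operator pentagon relation on an arbitrary vector, tracks its summand components through each stage, and equates the $(p,q)$-components, which is exactly your inclusion--projection bookkeeping in slightly different language. Your index accounting (only $z$ remains free and summed on the left, no free index on the right, with $y$, $z$, $p$, $q$, $x$ preserved by the respective block-diagonal maps) matches the paper's computation precisely.
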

\begin{proof}
    Denote the left hand side of the pentagon relation by $\mathcal{L}$, and the right hand side by $\mathcal{R}$. 
    
    Let $$\alpha\in \bigoplus\limits_{x, y\in I}V_{yd}^e\otimes V_{xc}^y\otimes V_{ab}^x$$ and let $$\alpha^y\in \bigoplus\limits_{x\in I} V_{yd}^e\otimes V_{xc}^y\otimes V_{ab}^x$$ be the component of the vector $\alpha$, which corresponds to summands with fixed $y\in I$. Similarly, $$\alpha^{xy}\in V_{yd}^e\otimes V_{xc}^y\otimes V_{ab}^x$$ is the component corresponding to the summand with both $x, y\in I$ fixed.
    
    Calculate 
    \begin{multline*}
        \Bigl(\bigoplus_{y\in I} id_{yd}^{e}\otimes \mathcal{F}_{abc}^{y}\Bigr)(\alpha) = \Bigl(\bigoplus_{y\in I} id_{yd}^{e}\otimes \mathcal{F}_{abc}^{y}\Bigr)\Bigl(\sum\limits_{y\in I}\alpha^y\Bigr) = \\ = \sum\limits_{y\in I} (id_{yd}^e\otimes \mathcal{F}_{abc}^{y}) \Bigl(\sum\limits_{x\in I}\alpha^{xy}\Bigr) = \sum\limits_{x, y, z\in I}(id_{yd}^{e}\otimes \mathcal{F}_{abc}^{y}|^{x}_{z})(\alpha^{xy}).
    \end{multline*}

    Denote the obtained vector by $$\beta\in \bigoplus\limits_{y, z\in I}V_{yd}^e\otimes V_{az}^y\otimes V_{bc}^z.$$

    By using similar notations calculate $$\Bigl(\bigoplus_{z\in I}\mathcal{F}_{azd}^{e}\otimes id_{bc}^{z}\Bigr)(\beta) = \sum\limits_{p, y, z\in I}(\mathcal{F}_{azd}^{e}|^{y}_{p}\otimes id_{bc}^{z})(\beta^{yz}).$$

    Denote the obtained vector by $$\gamma\in \bigoplus\limits_{z, p\in I}V_{ap}^e\otimes V_{zd}^p\otimes V_{bc}^z.$$

    Next, calculate $$\Bigl(\bigoplus_{p\in I}id_{ap}^e\otimes \mathcal{F}_{bcd}^{p}\Bigr)(\gamma) = \sum\limits_{p, q, z\in I}(id_{ap}^{e}\otimes \mathcal{F}_{bcd}^{p}|^{z}_{q})(\gamma^{pz}).$$

    The result is the vector $$\mathcal{L}(\alpha)\in \bigoplus_{p, q\in I}V_{ap}^{e}\otimes V_{bq}^{p}\otimes V_{cd}^{q}.$$

    Then 
    \begin{multline*}
        \mathcal{L}(\alpha)^{pq} = \sum\limits_{z\in I}(id_{ap}^{e}\otimes \mathcal{F}_{bcd}^{p}|^{z}_{q})(\gamma^{pz}) = \\ = \sum\limits_{y, z\in I}\left((\mathcal{F}_{azd}^{e}|^{y}_{p}\otimes id_{bc}^{z})\circ (id_{ap}^{e}\otimes \mathcal{F}_{bcd}^{p}|^{z}_{q})\right) (\beta^{yz}) = \\ = \sum\limits_{x, y, z\in I}\left((id_{yd}^e\otimes \mathcal{F}_{abc}^{y}|^{x}_{z})\circ (\mathcal{F}_{azd}^{e}|^{y}_{p}\otimes id_{bc}^{z})\circ (id_{ap}^{e}\otimes \mathcal{F}_{bcd}^{p}|^{z}_{q})\right) (\alpha^{xy}).
    \end{multline*}

    In the same way $$\mathcal{R}(\alpha)^{pq} = \sum\limits_{x, y\in I}\left((\mathcal{F}_{xcd}^{e}|^{y}_{q}\otimes id_{ab}^{x})\circ P_{23}\circ (\mathcal{F}_{abq}^{e}|^{x}_{p}\otimes id_{cd}^{q})\right)(\alpha^{xy}).$$

    The theorem statement follows from the fact that $\mathcal{L}(\alpha)^{pq} = \mathcal{R}(\alpha)^{pq}$ for any $\alpha\in \bigoplus\limits_{x, y\in I}V_{yd}^e\otimes V_{xc}^y\otimes V_{ab}^x$ and $p, q\in I$.
\end{proof}

The equation in the statement of the theorem \ref{Theorem:ShortForm} can be seen as an alternative short form of the pentagon relation. It's clear that these two forms are equivalent.

In the literature (see, for example, \cite{Ca, Tu}) the components $\mathcal{F}_{abc}^{d}|^{x}_{y}$ of the map $\mathcal{F}_{abc}^{d}$ are called 6j-symbols and denoted by $$\left\{
    \begin{array}{ccc}
        a & b & x \\
        c & d & y
    \end{array}
\right\}\colon V_{xc}^{d}\otimes V_{ab}^{x}\to V_{ay}^{d}\otimes V_{bc}^{y}.$$ The equation from the theorem \ref{Theorem:ShortForm} is often called the Biedenharn -- Elliott identity. 

\subsection{Tensor form}

Fix a base $\{e_i\}_{i = 1}^{n}$ of the module $V_{xc}^d$, $\{f_j\}_{j = 1}^{m}$ of the module $V_{ab}^x$, $\{g_r\}_{r = 1}^{k}$ of the module $V_{ay}^d$ and $\{h_s\}_{s = 1}^{l}$ of the module $V_{bc}^y$. In these bases the map $\mathcal{F}_{abc}^d|^{x}_{y}\colon V_{xc}^d\otimes V_{ab}^x \to V_{ay}^d\otimes V_{bc}^y$ is defined by constants $R_{ij}^{rs}\in \K$ such that $$\mathcal{F}_{abc}^d|^{x}_{y}(e_i\otimes f_j) = \sum\limits_{s, r}R_{ij}^{rs} g_r\otimes h_s.$$

Define the tensor $$\mathbb{F}_{abc}^{d}|^{x}_{y}\colon V_{xc}^d\otimes V_{ab}^x\otimes (V_{ay}^{d})^* \otimes (V_{bc}^{y})^* \to \K$$ by the same coordinates $R_{ij}^{rs}$: $$\mathbb{F}_{abc}^{d}|^{x}_{y}(e_i\otimes f_j\otimes g^{r}\otimes h^{s}) = R_{ij}^{rs},$$ where $\{g^r\}_{r = 1}^{k}$ is a dual basis of $(V_{ay}^{d})^*$ and $\{h^s\}_{s = 1}^{l}$ is a dual basis of $(V_{bc}^{y})^*$.

Let $T\colon U\otimes U^{*}\otimes W\to \K$ be a tensor. Let $\{e_i\}_{i = 1}^{n}$ be a basis of $U$, $\{e^i\}_{i = 1}^{n}$ be a dual basis of $U^{*}$ and $\{f_j\}_{j = 1}^{m}$ be a basis of $W$. Then define the result of the contraction of the tensor $T$ along the pair of modules $U$ and $U^{*}$ (denote it $*_{U}T\colon W\to\K$) by the formula $$*_{U}T(f_j) = \sum\limits_{i = 1}^{n}T(e_i\otimes e^i\otimes f_j).$$

It's clear that contractions along different pairs of dual modules are commutes.

For the family of tensors $\{\mathbb{F}_{abc}^{d}|^{x}_{y}\}$ denote the construction along the pair of modules $V_{ab}^{c}$ and $(V_{ab}^{c})^*$ by $*_{ab}^{c}$.

\begin{theorem}
    \label{Theorem:Tensors}
    For any $a, b, c, d, e, x, y, p, q\in I$: $$\sum\limits_{z\in I}*_{zd}^{p} *_{bc}^{z} *_{az}^{y}(\mathbb{F}_{bcd}^{p}|^{z}_{q}\otimes \mathbb{F}_{azd}^{e}|^{y}_{p}\otimes \mathbb{F}_{abc}^{y}|^{x}_{z}) = *_{xq}^{e}(\mathbb{F}_{xcd}^{e}|^{y}_{q}\otimes \mathbb{F}_{abq}^{e}|^{x}_{p}).$$
\end{theorem}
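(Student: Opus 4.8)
The plan is to translate the component identity of Theorem \ref{Theorem:ShortForm} (the Biedenharn--Elliott identity) into the tensor language, exploiting the fact that the tensor $\mathbb{F}_{abc}^d|^x_y$ was \emph{defined} to carry exactly the same coordinates $R_{ij}^{rs}$ as the map $\mathcal{F}_{abc}^d|^x_y$. Thus nothing genuinely new is asserted about the maps; the content of the theorem is that \emph{composition of the component maps corresponds to contraction of the associated tensors}, and that the combinatorics of which modules are glued reproduces the contraction pattern $*_{zd}^p*_{bc}^z*_{az}^y$ on the left and $*_{xq}^e$ on the right.

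First I would record the elementary dictionary between maps and tensors. For a linear map $A\colon U\to W$ with $A(u_i)=\sum_r A_i^r w_r$, its associated tensor $\tilde A\colon U\otimes W^*\to\K$ is given by $\tilde A(u_i\otimes w^r)=A_i^r$; by the very definition of $\mathbb{F}$, the tensor associated with $\mathcal{F}_{abc}^d|^x_y$ is $\mathbb{F}_{abc}^d|^x_y$. The one computation that makes everything work is that, for $A\colon U\to W$ and $B\colon W\to Z$, one has $*_W(\tilde A\otimes\tilde B)=\widetilde{A\circ B}$ in the left-to-right convention of the paper, since both sides send $u_i\otimes z^s$ to $\sum_r A_i^r B_r^s$. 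In other words, composing two maps along an intermediate module $W$ is the same as contracting their tensors along the dual pair $W,W^*$. Two further observations complete the dictionary: the tensor of $id_U$ is the Kronecker pairing $\widetilde{id_U}(u_i\otimes u^k)=\delta_i^k$, so contracting any leg against $\widetilde{id_U}$ merely relabels that index and deletes the identity tensor; and the permutation $P_{23}$ likewise becomes a product of Kronecker deltas that only reroutes legs. Consequently $A\otimes id_U$ has tensor $\tilde A\otimes\widetilde{id_U}$, and contracting through the identity and permutation tensors produces no genuine summation---it only records which external module is glued to which $\mathbb{F}$-leg.

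With this dictionary the proof becomes bookkeeping. On the left-hand side of Theorem \ref{Theorem:ShortForm}, each of the three factors is a genuine component map tensored with an identity; writing its tensor and contracting along every intermediate module (output legs of one factor against input legs of the next) gives, after the identity tensors collapse, precisely the triple contraction $*_{zd}^p*_{bc}^z*_{az}^y$ of $\mathbb{F}_{bcd}^p|^z_q\otimes\mathbb{F}_{azd}^e|^y_p\otimes\mathbb{F}_{abc}^y|^x_z$: the pairs $V_{az}^y$ and $V_{zd}^p$ are glued directly between adjacent $\mathbb{F}$'s, while $V_{bc}^z$ is glued through the single $id_{bc}^z$ tensor, and the surviving external legs are exactly $V_{yd}^e\otimes V_{xc}^y\otimes V_{ab}^x$ together with the duals $(V_{ap}^e)^*\otimes(V_{bq}^p)^*\otimes(V_{cd}^q)^*$. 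The remaining sum over $z$ in Theorem \ref{Theorem:ShortForm} is carried over verbatim. Running the same translation on the right-hand side, the permutation $P_{23}$ and the two identity tensors $id_{ab}^x,id_{cd}^q$ collapse and leave the single contraction $*_{xq}^e$ of $\mathbb{F}_{xcd}^e|^y_q\otimes\mathbb{F}_{abq}^e|^x_p$, with matching external legs. Since both tensor expressions are obtained by applying the \emph{same} map-to-tensor passage to the two sides of an identity that already holds by Theorem \ref{Theorem:ShortForm}, they are equal, which is the claim.

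The step I expect to be delicate is the middle one: verifying that contracting through the identity and permutation tensors reproduces exactly the listed contractions and the correct assignment of external legs, rather than some permuted or mismatched gluing. This is where one must be scrupulous about which copy of each module $V_{bc}^z$, $V_{yd}^e$, $V_{ap}^e$ (and on the right $V_{ab}^x$, $V_{cd}^q$, $V_{xq}^e$) is identified with which, and about the left-to-right composition convention; the underlying linear algebra is routine, but the index matching is where an error would most naturally creep in.
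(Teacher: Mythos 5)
Your proposal is correct and takes essentially the same approach as the paper: both deduce the statement from Theorem \ref{Theorem:ShortForm} by translating the component maps into tensors carrying the same coordinates $R_{ij}^{rs}$ and using the fact that composition of maps corresponds to contraction along the intermediate (dual pair of) modules, with identities and $P_{23}$ contributing no genuine summation. The only cosmetic difference is that the paper routes the translation through Turaev-style intermediate vectors $F_{abc}^{d}|^{x}_{y}$ and evaluation against an arbitrary $\alpha$, whereas you apply the map-to-tensor dictionary directly, treating the identity and permutation factors as Kronecker deltas.
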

\begin{proof}
    We will use the approach from the proof \cite[Corollary VI.1.5.3]{Tu}. For each map $$\mathcal{F}_{abc}^{d}|^{x}_{y}\colon V_{xc}^{d}\otimes V_{ab}^{x}\to V_{ay}^{d}\otimes V_{bc}^{y}$$ consider the vector $$F_{abc}^{d}|^{x}_{y}\in (V_{xc}^{d})^*\otimes (V_{ab}^{x})^*\otimes V_{ay}^{d}\otimes V_{bc}^{y},$$ defined by the formula $$F_{abc}^{d}|^{x}_{y} = \sum\limits_{i, j, r, s} R_{ij}^{rs}\cdot e^i\otimes f^j\otimes g_r\otimes h_s,$$ where as previously $\{e^i\}_{i = 1}^{n}$ is a basis of $(V_{xc}^{d})^*$ dual to the basis $\{e_i\}_{i = 1}^{n}$ of the $V_{xc}^{d}$, $\{f^j\}_{j = 1}^{m}$ is a basis of $(V_{ab}^{x})^*$ dual to the basis $\{f_j\}_{j = 1}^{m}$ of the $V_{ab}^{x}$, $\{g_r\}_{r = 1}^{k}$ is a basis of $V_{ay}^{d}$, $\{h_s\}_{s = 1}^{l}$ is a basis of $V_{bc}^{y}$, and $$\mathcal{F}_{abc}^{d}|^{x}_{y}(e_i\otimes f_j) = \sum\limits_{r, s}R_{ij}^{rs}\cdot g_r\otimes h_s.$$

    For any vector $\alpha\in U\otimes U^*\otimes W$ define the result of contraction $*_{U}(\alpha)$ along the pair of modules $U$ and $U^{*}$ as follows. If $\{e_i\}_{i = 1}^{n}$ is a basis of $U$, $\{e^i\}_{i = 1}^{n}$ is a dual basis of $U^*$, $\{f_j\}_{j = 1}^{m}$ is a basis of $W$, and $\alpha = \sum\limits_{i, j, k}\alpha^{ik}_{j} e_i\otimes e^j\otimes f_k$, then $$*_{U}(\alpha) = \sum\limits_{i, k}\alpha^{ik}_{i} f_k\in W.$$

    Denote contraction along the pair $V_{ab}^c$ and $(V_{ab}^{c})^{*}$ by $*_{ab}^c$. Notice that $$\mathcal{F}_{abc}^{d}|^{x}_{y}(\alpha) = *_{xc}^{d} *_{ab}^{x} (F_{abc}^{d}|^{x}_{y}\otimes \alpha).$$

    Let $\mathcal{L}$ be the left hand side of the statement in the theorem \ref{Theorem:ShortForm}, and let $\mathcal{R}$ be the right hand side of that statement. Then for any $\alpha\in V_{yd}^{e}\otimes V_{xc}^{y}\otimes V_{ab}^{x}$: $$\mathcal{L}(\alpha) = \sum\limits_{z\in I} *_{zd}^{p} *_{bc}^{z} *_{yd}^{e} *_{az}^{y} *_{xc}^{y} *_{ab}^{x} (F_{bcd}^{p}|^{z}_{q}\otimes F_{azd}^{e}|^{y}_{p}\otimes F_{abc}^{y}|^{x}_{z}\otimes \alpha).$$

    Simillary $$\mathcal{R}(\alpha) = *_{xq}^{e} *_{ab}^{x} *_{yd}^{e} *_{xc}^{y} (F_{abq}^{e}|^{x}_{p}\otimes F_{xcd}^{e}|^{y}_{q}\otimes \alpha).$$

    Theorem \ref{Theorem:ShortForm} states that $\mathcal{L}(\alpha) = \mathcal{R}(\alpha)$ for any $\alpha\in V_{yd}^{e}\otimes V_{xc}^{y}\otimes V_{ab}^{x}$. Then $$\sum\limits_{z\in I} *_{zd}^{p} *_{bc}^{z} *_{az}^{y} (F_{bcd}^{p}|^{z}_{q}\otimes F_{azd}^{e}|^{y}_{p}\otimes F_{abc}^{y}|^{x}_{z}) = *_{xq}^{e} (F_{abq}^{e}|^{x}_{p}\otimes F_{xcd}^{e}|^{y}_{q}).$$

    The statement of the theorem \ref{Theorem:Tensors} follows from two facts: for any $a, b, c, d, x, y\in I$, the coordinates of the tensor $\mathbb{F}_{abc}^{d}|^{x}_{y}$ and the vector $F_{abc}^{d}|^{x}_{y}$ in a fixed bases are the same, and under contraction they change in the same way.
\end{proof}

In \cite[Chapter VI]{Tu} tensors $\mathbb{F}_{abc}^{d}|^{x}_{y}$ from the statement of the theorem \ref{Theorem:Tensors} and vectors $F_{abc}^{d}|^{x}_{y}$ from the proof are denoted by 
\begin{center}
    $\left\{
        \begin{array}{ccc}
            a & b & x \\
            c & d & y
        \end{array}
    \right\}''$ and $\left\{
        \begin{array}{ccc}
            a & b & x \\
            c & d & y
        \end{array}
    \right\}'$
\end{center}
respectively.

\subsection{Normalized tensor form}

For each $x\in I$, introduce a parameter $w_x\in\K$. Then denote $$\left|\begin{array}{ccc}
    a & b & x \\
    c & d & y
\end{array}\right| = \frac{1}{w_y}\cdot \mathbb{F}_{abc}^{d}|^{x}_{y}.$$

\begin{theorem}
    \label{Theorem:Symbols}
    For any $a, b, c, d, e, x, y, p, q\in I$: 
    \begin{multline*}
        \sum\limits_{z\in I}w_z\cdot *_{zd}^{p} *_{bc}^{z} *_{az}^{y} \left(\left|\begin{array}{ccc}
            b & c & z \\
            d & p & q
        \end{array}\right|\otimes \left|\begin{array}{ccc}
            a & z & y \\
            d & e & p
        \end{array}\right|\otimes \left|\begin{array}{ccc}
            a & b & x \\
            q & e & p
        \end{array}\right|\right) = \\ = *_{xq}^{e}\left(\left|\begin{array}{ccc}
            x & c & y \\
            d & e & q
        \end{array}\right|\otimes \left|\begin{array}{ccc}
            a & b & x \\
            q & e & p
        \end{array}\right|\right).
    \end{multline*}
\end{theorem}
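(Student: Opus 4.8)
The plan is to obtain Theorem~\ref{Theorem:Symbols} as an immediate consequence of Theorem~\ref{Theorem:Tensors}, by substituting the definition
\[
\left|\begin{array}{ccc} a & b & x \\ c & d & y \end{array}\right| = \frac{1}{w_y}\cdot \mathbb{F}_{abc}^{d}|^{x}_{y}
\]
of the normalized symbols and then collecting the scalar weights. The key structural remark is that the weight $1/w_{\bullet}$ carried by a symbol is read off from the entry in its lower-right corner (this is exactly the index $y$ appearing in $\mathbb{F}_{abc}^{d}|^{x}_{y}$). Because each elementary contraction $*_{\bullet\bullet}^{\bullet}$ and each tensor product is $\K$-linear in every argument, I would first pull all of these scalars outside the contractions on both sides of the asserted identity.

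Carrying this out on the left-hand side, the three normalized symbols are, under the definition, the tensors $\mathbb{F}_{bcd}^{p}|^{z}_{q}$, $\mathbb{F}_{azd}^{e}|^{y}_{p}$ and $\mathbb{F}_{abc}^{y}|^{x}_{z}$ of Theorem~\ref{Theorem:Tensors}, divided respectively by $w_q$, $w_p$ and $w_z$ (the entries in their lower-right corners). Together with the explicit prefactor $w_z$ this produces the scalar $w_z\cdot\frac{1}{w_q w_p w_z}=\frac{1}{w_q w_p}$, which no longer depends on the summation index $z$ and may therefore be factored through the sum. Thus the left-hand side equals $\frac{1}{w_q w_p}$ times the left-hand side of Theorem~\ref{Theorem:Tensors}. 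On the right-hand side the two symbols similarly contribute the weights $1/w_q$ and $1/w_p$, so that side equals $\frac{1}{w_q w_p}$ times the right-hand side of Theorem~\ref{Theorem:Tensors}. Since the two sides are thereby $\frac{1}{w_q w_p}$ times the two sides of an identity already established in Theorem~\ref{Theorem:Tensors}, they are equal, and the proof is complete.

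The only genuine work is the scalar bookkeeping, and the step I expect to demand the most care is the cancellation of the summation-dependent weight. Of the three normalizing factors $w_q$, $w_p$, $w_z$ occurring inside the sum, only $w_z$ depends on the summation index, and it alone would obstruct a term-by-term appeal to Theorem~\ref{Theorem:Tensors}; the prefactor $w_z$ is precisely what is required to absorb it. I would therefore verify two points explicitly: that for each symbol the extracted weight is indeed indexed by its lower-right entry, so that the summed factor contributes exactly $w_z$ and nothing else that is $z$-dependent survives; and that the residual scalar $1/(w_q w_p)$ genuinely coincides on the two sides. Everything else reduces to the $\K$-linearity of the contraction operation over tensor products, already recorded when that operation was introduced.
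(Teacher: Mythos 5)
Your proposal is correct and is essentially the paper's own proof: the paper, too, obtains Theorem~\ref{Theorem:Symbols} from Theorem~\ref{Theorem:Tensors} by substituting $\mathbb{F}_{abc}^{d}|^{x}_{y}=w_y\cdot\left|\begin{array}{ccc} a & b & x \\ c & d & y\end{array}\right|$ into each factor and letting the scalars pass through the multilinear contractions, with the prefactor $w_z$ absorbing the only summation-dependent weight, exactly as you describe. One discrepancy is worth flagging. You identify the third symbol on the left-hand side as $\frac{1}{w_z}\,\mathbb{F}_{abc}^{y}|^{x}_{z}$ (lower-right entry $z$), but in the statement as printed that symbol reads $\left|\begin{array}{ccc} a & b & x \\ q & e & p\end{array}\right|$, whose lower-right entry is $p$ and which by the definition equals $\frac{1}{w_p}\,\mathbb{F}_{abq}^{e}|^{x}_{p}$. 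This is a typo in the paper rather than a gap in your argument: the printed tensor lives on $V_{xq}^{e}\otimes V_{ab}^{x}\otimes (V_{ap}^{e})^{*}\otimes (V_{bq}^{p})^{*}$, so the contractions $*_{bc}^{z}$ and $*_{az}^{y}$ could not even be applied to it, and the paper's own proof substitutes $\mathbb{F}_{abc}^{y}|^{x}_{z}=w_z\cdot\left|\begin{array}{ccc} a & b & x \\ c & y & z\end{array}\right|$, confirming that the intended symbol is $\left|\begin{array}{ccc} a & b & x \\ c & y & z\end{array}\right|$ --- precisely the reading on which your cancellation $w_z\cdot\frac{1}{w_z}=1$ rests. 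The only other (cosmetic) difference is directional: you multiply the already-established identity of Theorem~\ref{Theorem:Tensors} by $\frac{1}{w_p w_q}$, whereas the paper substitutes and implicitly cancels $w_p w_q$; both are legitimate, since the normalization $\frac{1}{w_y}\cdot\mathbb{F}_{abc}^{d}|^{x}_{y}$ already presupposes that each $w_x$ is invertible in $\K$.
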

\begin{proof}
    It follows from the theorem \ref{Theorem:Tensors} by substituting 
    \begin{center}
        $\mathbb{F}_{bcd}^{p}|^{z}_{q} = w_q \cdot \left|\begin{array}{ccc}
            b & c & z \\
            d & p & q \\
        \end{array}\right|$, 
        $\mathbb{F}_{azd}^{e}|^{y}_{p} = w_p \cdot \left|\begin{array}{ccc}
            a & z & y \\
            d & e & p \\
       \end{array}\right|$, 
       $\mathbb{F}_{abc}^{y}|^{x}_{z} = w_z \cdot \left|\begin{array}{ccc}
           a & b & x \\
           c & y & z \\
        \end{array}\right|$,

        $\mathbb{F}_{xcd}^{e}|^{y}_{q} = w_q \cdot \left|\begin{array}{ccc}
            x & c & y \\
            d & e & q \\
        \end{array}\right|$, 
        $\mathbb{F}_{abq}^{e}|^{x}_{p} = w_p \cdot \left|\begin{array}{ccc}
            a & b & x \\
            q & e & p \\
        \end{array}\right|$.
    \end{center}
\end{proof}

The family of tensors $\left\{\left|\begin{array}{ccc}
    a & b & x \\
    c & d & y
\end{array}\right|\right\}$ plays an important role in the theory of invariants for 3-manifolds. If for each $a, b, c, d, x, y\in I$ the tensor $\left|\begin{array}{ccc}
    a & b & x \\
    c & d & y
\end{array}\right|$ is symmetric in the sense that $$\left|\begin{array}{ccc}
    a & b & x \\
    c & d & y
\end{array}\right| = \left|\begin{array}{ccc}
    b & a & x \\
    d & c & y
\end{array}\right| = \left|\begin{array}{ccc}
    y & d & a \\
    x & b & c
\end{array}\right|,$$ then these tensors and values $w_x\in \K$, $x\in I$, define a Turaev -- Viro type invariant for closed 3-manifolds. The statement of the theorem \ref{Theorem:Symbols} is also often referred to as the Biedenharn -- Elliott identity.

\section{Solution of the pentagon relation from modular categories}

Let $\mathcal{C}$ be a modular category. To construct the solution of the pentagon relation, we should fix the set of colours $I$ and modules $V_{ab}^{c}$ for all $a, b, c\in I$.

Let the set $I$ be a set of simple objects of the category $\mathcal{C}$.

For any three simple objects $a, b, c\in I$ define $V_{ab}^c = Hom(c\to b\otimes a)$, i.e. the module $V_{ab}^{c}$ is a set of morphisms from the object $c$ to the object $b\otimes a$. This set is equipped by a natural module structure, because the category $\mathcal{C}$ is modular.

For any $a, b, c, d\in I$ consider two isomorphisms $$A_{abc}^d\colon \bigoplus_{x\in I}V_{xc}^d\otimes V_{ab}^{x}\to Hom(d\to c\otimes (b\otimes a)),$$ $$B_{abc}^{d}\colon\bigoplus_{y\in I} V_{ay}^{d}\otimes V_{bc}^{y}\to Hom(d\to (c\otimes b)\otimes a),$$ defined as follows. The isomorphism $A_{abc}^{d} = \bigoplus\limits_{x\in I}A_{abc}^{d}|^{x}$, where the homomorphism $$A_{abc}^{d}|^{x}\colon V_{xc}^d\otimes V_{ab}^{x}\to Hom(d\to c\otimes (b\otimes a))$$ is defined by the formula $$A_{abc}^{d}|^{x}(f_x\otimes g_x) = f_x\circ (id_c\otimes g_x).$$ As previously we write the morphisms composition from left to right.

Similarly, $B_{abc}^{d} = \bigoplus\limits_{y\in I}B_{abc}^d|^{y}$, where the homomorphism $$B_{abc}^{d}|^{y}\colon V_{ay}^d\otimes V_{bc}^{y}\to Hom(d\to (c\otimes b)\otimes a)$$ is defined by the formula $$B_{abc}^{d}|^{y}(f_y\otimes g_y) = f_y\circ (g_y\otimes id_a).$$

Finally, $$\mathcal{F}_{abc}^{d} = A_{abc}^{d}\circ \alpha_{c, b, a}^{-1}\circ (B_{abc}^{d})^{-1},$$ where $\alpha_{c, b, a}\colon (c\otimes b)\otimes a \to c\otimes (b\otimes a)$ is an associativity isomorphism which defines the monoidal structure on the category $\mathcal{C}$.

In \cite[Theorem VI.1.5.1]{Tu} it is proved that the family of maps $\{\mathcal{F}_{abc}^{d}\}$ satisfies the pentagon relation.

\end{document}